\numberwithin{equation}{section}
\providecommand{\U}[1]{\protect\rule{.1in}{.1in}}
\theoremstyle{plain}
\newtheorem{thm}{Theorem}[section]
\newtheorem{lem}[thm]{Lemma}
\newtheorem{cor}[thm]{Corollary}
\newtheorem{dfn}[thm]{Definition}
\newtheorem{prop}[thm]{Proposition}
\theoremstyle{definition}
\newtheorem{?}[thm]{Problem}
\theoremstyle{definition}
\newtheorem*{nt*}{Notation}
\newcommand{\re}{{\rm Re}\,}
\newcommand{\hb}{\mathbb{C}_+}
\newcommand{\Ber}{\mathcal{A}^2_{\alpha}(\hb)}
\newcommand{\I}{\mathrm{i}}
\newcommand{\D}{\mathrm{d}}
\newcommand {\C} {\mathbb C}
\begin{document}
\title[]{Dynamics for affine composition operators on weighted Bergman space of a half plane}
	
\author{}

\address{}  
\email{}

\author{Artur Blois}
\address{IMECC, Universidade Estadual de Campinas, Campinas, Brazil}
\email{blois@ime.unicamp.br}
\thanks{A. Blois is a PhD student at the Programa de Matemática and is supported by CAPES (Coordenação de Aperfeiçoamento Pessoal de Ensino Superior)}
    
\author{ Osmar R. Severiano}%
\address{ IMECC, Universidade Estadual de Campinas, Campinas, Brazil}
\email{osmar.rrseveriano@gmail.com}
\thanks{ O.R. Severiano is a postdoctoral fellow at the Programa de Matemática and is supported by UNICAMP (Programa de Pesquisador de Pós-Doutorado PPPD) }

\subjclass[2020]{Primary 47A16, 47B33; Secondary 37D45}

\keywords{}

\begin{abstract}
In this article, we completely characterize the positive expansive and absolutely Cesàro composition operators $C_{\phi}f=f\circ \phi$ induced by affine self-maps $\phi$ of the right half-plane $\mathbb{C}_+$ on the weighted Bergman space $\Ber$. Furthermore, we characterize which of these operators have the positive shadowing property.
\end{abstract}
	
\maketitle

\section{Introduction}
The study of the dynamics of continuous linear operators on infinite-dimensional Banach spaces has been the subject of study by several researchers in recent decades and has undergone great development (for example, see  \cite{Frederic, Grivaux, Grosse}). The notions of \textit{expansivity} and \textit{positive shadowing property} play important roles in dynamical systems, including topological dynamics, differentiable dynamics and ergodic theory, see \cite{Pilyugin, Pilyugin1}, for instance.

Let $\mathcal{S}$ be a space of functions on a subset $\Omega$ of the complex plane $\mathbb{C}.$ A \emph{composition operator} $C_\phi$ on $\mathcal{S}$ is an operator acting by composition to the right with a chosen self-map $\phi$ of $\Omega$, that is
\[C_{\phi}f = f \circ \phi, \quad f \in \mathcal{S}.\]
Linear dynamics of composition operators has been studied on several holomorphic function spaces \cite{Bayart, Darji, Bourdon,Chalendar, Gallardo}. In \cite{Carlos}, Álvarez and Henríquez-Amador studied the affine composition operators on the Hardy space of the open right half-plane $H^2(\hb)$, that is, the bounded composition operators induced by symbols of the form 
\begin{align}\label{symbol}
\phi(w)=aw+b, \text{ where } a > 0 \text{ and } \re(b) \geq 0,
\end{align}
and completely characterized those operators that are uniformly expansive, positive expansive, uniformly positive expansive, Li-Yorke chaotic and have the positive shadowing property. In the proof of  \cite[Theorem 7]{Carlos} the authors use the argument that the reproducing kernels are dense in $H^2(\hb)$, however this claim is false, and as a consequence of this fact theys conclude that if $\phi$ is as in \eqref{symbol} then $C_{\phi}$ is uniformly positive expansive on $H^2(\hb)$ if and only if $a\in (0,1).$ As we will see in Proposition \ref{uniformly positive expansive}, although the proof \cite[Theorem 7]{Carlos} is wrong, its result holds.

Our goal in this work is  to characterize completely the affine composition operators  on the weighted Bergman space of the open right half-plane $\Ber,$ for $\alpha>-1,$ that are expansive, uniformly expansive, positive expansive, uniformly positive expansive, Li-Yorke chaotic and has positive shadowing property. Since $H^2(\hb)$ can often be formally interpreted as the ``limit case'' of the weighted Bergman spaces as $\alpha\rightarrow -1,$ that is, $H^2(\hb)=\mathcal{A}_{-1}^2(\hb)$ (see \cite[page 819]{Riikka}) our results can be seen as extensions of those proved in \cite{Carlos}.

\section{Preliminaries}\label{s1}
Throughout the work we will use the following notations: $\mathbb{N}$ denotes the set of all positive integers, $\mathbb{R}_+$ the non-negative real numbers and $\hb=\{z\in \mathbb{C}:\re(b)>0\}$ the open right half-plane. Moreover, $\mathcal{B}(X)$ denotes the space of all bounded linear operators on a complex Banach space $X.$

\subsection{The weighted Bergman space \texorpdfstring{$\Ber$}{}}
For $\alpha> -1,$ the \textit{weighted Bergman space} $\mathcal{A}^2_{\alpha}(\hb)$ consists of all analytic functions $f:\hb\rightarrow \mathbb{C}$ for which 
\begin{align}\label{norm}
\|f\|= \left(\frac{1}{\pi}\int_{-\infty}^{\infty}\int_{0}^{\infty} |f(x+\I y)|^2x^{\alpha}\D x \D y\right)^{1/2}<\infty .
\end{align}
The space $\mathcal{A}^2_{\alpha}(\hb)$ is a reproducing kernel Hilbert space when endowed with the norm defined in \eqref{norm}. The functions $\{k_w^{\alpha}:w\in \mathbb{C}_+\}$ defined by
\begin{align*}
k_{w}^{\alpha}(z)=\frac{2^{\alpha}(\alpha+1)}{(z+\overline{w})^{\alpha+2}}, \quad z\in \hb,
 \end{align*}
are the reproducing kernels for $\mathcal{A}^2_{\alpha}(\hb).$ These kernel functions have the fundamental property $\langle f, k_w\rangle=f(w)$ for all $f\in \Ber$ and $w\in \hb.$ Such spaces are studied by Ellitot and Wynn in \cite{Elliot}.

Let $\mathrm{d}\mu_{\alpha}=\frac{\Gamma(1+\alpha)}{2^{\alpha}t^{\alpha+1}}\mathrm{d}t,$ where $\Gamma$ is the Gamma function and $\mathrm{d}t$ the Lebesgue measure. According to \cite[Lem. 3.3]{Elliot}, the space $\mathcal{A}^2_{\alpha}(\hb)$ is isometrically isomorphic, via the Laplace
transform $\mathcal{L},$ to the space $L^2(\mathbb{R}_+, \mu_{\alpha}).$  That is, $f\in \Ber$ if and only if there exists a function $F\in L^2(\mathbb{R}_+, \mu_{\alpha})$ such that
\begin{align}\label{paley-wiener}
f(w)=(\mathcal{L}F)(w)=\int^{\infty}_0F(t)e^{-wt}\D t, \quad w\in \hb.
\end{align}

\subsection{Affine composition operators}
In \cite[Theorem 3.4]{Elliot}, Elliot and Wynn showed that if $\phi$ is an analytic self-map of $\hb,$ then $C_{\phi}$ is bounded on $\Ber$ if and only if $\phi$ has a finite angular derivative at the fixed point $\infty, $ that is, if $\phi(\infty)=\infty$ and if the non-tangential limit 
\begin{align}
\phi'(\infty)=\lim_{n\rightarrow \infty}\frac{w}{\phi(w)}
\end{align}
exists and is finite. In this case, the norm of $C_{\phi}$ is given by $\|C_{\phi}\|=\phi'(\infty)^{\frac{\alpha+2}{2}}.$ Moreover, if $C_{\phi}$ is a bounded composition operator on $\Ber,$ then $C_{\phi}^*$ acts on reproducing kernel as follows  $C_{\phi}^*k_w=k_{\phi(w)},$ for each $w\in \hb.$

Since the only linear fractional self-maps of $\hb$ that fix $\infty$ are precisely of affine maps of the form
\begin{align}\label{affine symbol}
\phi(w)=a w+b,  
\end{align}
where $a>0$ and $\re(b)\geq 0,$ it follows from boundedness criterion of composition operators on $\Ber$ that the only bounded composition operator induced by linear fractional self-maps of $\hb$ are those induced by symbols as in \eqref{affine symbol}. For this reason, in this work, we will call such operators of affine composition operators. A simple computation reveals that if $\phi$ is as in \eqref{affine symbol} then $n$-iterate of $\phi$ is given by
\begin{align}\label{iterate}
\phi^{[n]}(w)= \begin{cases}w+nb, & \text{if} \ a=1, \\ 
a^{n} w+\frac{\left(1-a^{n}\right)}{1-a}b,  & \text{if} \ a \neq 1 
\end{cases}
\end{align}	
and $C_{\phi}^n=\C_{\phi^{[n]}}.$  The only invertible bounded composition operators on $\Ber$ are those induced by symbols as in \eqref{affine symbol} where $a>0$ and $\re(b)=0.$ According to \cite[Theorem 3.1]{Chalendar}, the spectrum of affine composition operators on $\Ber$ are given as follows.

\begin{thm}\label{spectrum parabolic} Let $\phi(w)=w+b$ with $\re(b)\geq 0$ and $b\neq 0.$ Then the spectrum of $C_{\phi}$ acting on $\Ber$ is
\begin{enumerate}[label=\textnormal{(\roman*)}]
\item $\sigma(C_{\phi})=\mathbb{T},$ when $b\in \mathrm{i}\mathbb{R};$
\item $\sigma(C_{\phi})=\left\{e^{\I bt}:t\in [0, +\infty)\}\cup\{0\right\}.$
\end{enumerate}
\end{thm}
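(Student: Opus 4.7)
The plan is to diagonalize $C_{\phi}$ via the Laplace transform isomorphism $\mathcal{L}:L^{2}(\mathbb{R}_{+},\mu_{\alpha})\to\Ber$ recalled in Section~\ref{s1}, and then apply the standard spectral description of multiplication operators on an $L^2$-space. Given $F\in L^{2}(\mathbb{R}_{+},\mu_{\alpha})$ and $\phi(w)=w+b$, using \eqref{paley-wiener} one computes
\[
(C_{\phi}\mathcal{L}F)(w)=(\mathcal{L}F)(w+b)=\int_{0}^{\infty}F(t)e^{-bt}e^{-wt}\,\D t=\bigl(\mathcal{L}(M_{b}F)\bigr)(w),
\]
where $M_{b}$ denotes multiplication by the bounded symbol $g_{b}(t):=e^{-bt}$ on $L^{2}(\mathbb{R}_{+},\mu_{\alpha})$ (note $\|g_{b}\|_{\infty}=1$ since $\re(b)\geq 0$). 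Thus $C_{\phi}=\mathcal{L}\, M_{b}\,\mathcal{L}^{-1}$ is unitarily equivalent to $M_{b}$, so $\sigma(C_{\phi})=\sigma(M_{b})$.

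Next, since $M_{b}$ is multiplication by a continuous bounded function and the underlying measure $\mu_{\alpha}$ has full topological support on $[0,\infty)$, a standard fact gives $\sigma(M_{b})=\overline{g_{b}([0,\infty))}$. It then remains to identify this closure in each of the two regimes. If $b=\I\beta$ with $\beta\in\mathbb{R}\setminus\{0\}$, then $|g_{b}(t)|\equiv 1$ and $\{e^{-\I\beta t}:t\geq 0\}$ is dense in $\mathbb{T}$, whence $\sigma(C_{\phi})=\mathbb{T}$, yielding~(i). If $\re(b)>0$, then $|g_{b}(t)|=e^{-\re(b)t}$ decreases strictly from $1$ to $0$, so $g_{b}([0,\infty))$ is a logarithmic spiral accumulating only at $0$, and the closure is $g_{b}([0,\infty))\cup\{0\}$, matching~(ii).

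The main obstacle is the verification in case~(ii) that $0$ is the \emph{only} extra point produced by taking the closure. This reduces to showing that $t\mapsto g_{b}(t)$ is a proper injective map from $[0,\infty)$ into $\mathbb{C}\setminus\{0\}$, which follows from strict monotonicity of $t\mapsto|g_{b}(t)|$ together with $\lim_{t\to\infty}|g_{b}(t)|=0$: any limit point in $\mathbb{C}\setminus\{0\}$ would force two distinct parameters $t_{1},t_{2}$ with $|g_{b}(t_{1})|=|g_{b}(t_{2})|$, contradicting monotonicity. The unitary equivalence step and the identification of the spectrum of a multiplication operator with the essential range of its symbol are routine once set up, so the substance of the proof lies in the spiral-closure analysis.
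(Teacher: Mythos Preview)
Your argument is correct. The paper itself does not prove this theorem---it is quoted from \cite[Theorem~3.1]{Chalendar}---but the unitary equivalence $C_\phi\cong M_{e^{-bt}}$ via the Laplace transform that you exploit is exactly the computation the paper carries out later (for general $a$) in the proof of Proposition~\ref{normal}, so your route is the natural one in this setting and matches the tools already in the paper.

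Two remarks are worth recording. First, in case~(ii) your computation yields $\sigma(C_\phi)=\{e^{-bt}:t\ge 0\}\cup\{0\}$, whereas the statement as printed has $\{e^{\I bt}:t\ge 0\}\cup\{0\}$; these sets disagree (for $b>0$ real the former is $[0,1]$ while the latter is $\mathbb{T}\cup\{0\}$), and it is your formula that is correct---the $\I$ in the displayed statement is a transcription slip. Second, in case~(i) the set $\{e^{-\I\beta t}:t\ge 0\}$ actually equals $\mathbb{T}$ (not merely dense), and in case~(ii) your ``two distinct parameters'' explanation of the closure is slightly garbled; the clean statement is that strict monotonicity of $|g_b|$ forces any sequence $t_n$ with $g_b(t_n)\to z\neq 0$ to converge to the unique $t^*$ with $|g_b(t^*)|=|z|$, whence $z=g_b(t^*)$ already lies in the image. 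Your underlying claim that $g_b$ is a proper injection into $\mathbb{C}\setminus\{0\}$ is correct and is precisely what is needed.
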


\begin{thm}\label{spectrum hyperbolic} Let $\phi(w)=a w+b$ where $a \in (0,1)\cup(1,+\infty)$ and $\re(b)\geq 0.$ Then the spectrum of $C_{\phi}$ acting on $\Ber$ is
\begin{enumerate}[label=\textnormal{(\roman*)}]
\item $\sigma(C_{\phi})=\left\{\lambda \in \mathbb{C}:|\lambda|=a ^{-\left(\frac{\alpha+2}{2}\right)}\right\},$ when $b\in \mathrm{i}\mathbb{R};$
\item $\sigma(C_{\phi})=\left\{\lambda \in \mathbb{C}:|\lambda|\leq a ^{-\left(\frac{\alpha+2}{2}\right)}\right\},$ when $b\in \hb.$
\end{enumerate}
\end{thm}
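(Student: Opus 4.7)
The plan is to transfer the problem to the Laplace side via the isometric isomorphism $\mathcal{L}:L^2(\R_+,\mu_\alpha)\to\Ber$ of \eqref{paley-wiener}. A direct substitution $u=at$ in $(\mathcal{L}F)(aw+b)$ shows that $\mathcal{L}^{-1}C_\phi\mathcal{L}$ equals the weighted dilation
\[
(TF)(t)=\tfrac{1}{a}\,F(t/a)\,e^{-bt/a},\qquad t>0,
\]
on $L^2(\R_+,\mu_\alpha)$. Combining $(\phi^{[n]})'(\infty)=a^{-n}$ from \eqref{iterate} with the norm formula $\|C_\phi\|=\phi'(\infty)^{(\alpha+2)/2}$ recalled above, one obtains $\|C_\phi^n\|=a^{-n(\alpha+2)/2}$, so $r(C_\phi)=a^{-(\alpha+2)/2}$ and $\sigma(C_\phi)\subseteq\{|\lambda|\le a^{-(\alpha+2)/2}\}$ in both cases.

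\emph{Case (i), $b\in\mathrm{i}\R$.} Here $\phi^{-1}(w)=w/a-b/a$ is again of the form \eqref{affine symbol}, so $C_\phi$ is invertible and the same formula applied to $\phi^{-1}$ gives $r(C_\phi^{-1})=a^{(\alpha+2)/2}$, forcing $|\lambda|\ge a^{-(\alpha+2)/2}$ on $\sigma(C_\phi)$. Thus the spectrum is contained in a single circle. To recover the entire circle I would introduce a continuous rotational symmetry: with $\beta=-\theta/\ln a$, the multiplier $R_\theta F(t)=t^{\mathrm{i}\beta}F(t)$ is unitary on $L^2(\R_+,\mu_\alpha)$ (as $|t^{\mathrm{i}\beta}|=1$), and a short computation gives $R_\theta^{-1}TR_\theta=e^{\mathrm{i}\theta}T$. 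Hence $\sigma(T)=e^{\mathrm{i}\theta}\sigma(T)$ for every $\theta\in\R$, and a nonempty rotationally invariant subset of one circle is the whole circle.

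\emph{Case (ii), $\re(b)>0$.} The upper bound is in place from the first paragraph. Because $\phi(\hb)=\{\re(w)>\re(b)\}\subsetneq\hb$, the operator $C_\phi$ is not surjective and $0\in\sigma(C_\phi)$. To fill in the rest of the disk I would exhibit a family of eigenvalues dense in it. Put $p=b/(1-a)$. When $a>1$ we have $\re(p)<0$, so $f_s(w)=(w-p)^{-s}$ is holomorphic on $\hb$ and a direct check gives $C_\phi f_s=a^{-s}f_s$; integration against the weight in \eqref{norm} shows $f_s\in\Ber$ iff $\re(s)>(\alpha+2)/2$, so the eigenvalues $a^{-s}$ densely cover the punctured open disk $\{0<|\lambda|<a^{-(\alpha+2)/2}\}$. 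When $a<1$ the fixed point $p$ lies in $\hb$ and this ansatz fails, so I would work with the adjoint: $G_s(t)=t^s e^{-\bar b t/(1-a)}$ lies in $L^2(\R_+,\mu_\alpha)$ for $\re(s)>\alpha/2$ (integrability at $0$) and satisfies $T^*G_s=a^{s-\alpha-1}G_s$, with corresponding $g_s=\mathcal{L}G_s\propto(w+\bar b/(1-a))^{-s-1}$ an eigenfunction of $C_\phi^*$. The resulting eigenvalues again densely fill the open disk. Closure then yields $\sigma(C_\phi)=\{|\lambda|\le a^{-(\alpha+2)/2}\}$.

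The main obstacle I anticipate is the $a<1$ subcase of (ii), where the most natural candidate $(w-p)^{-s}$ becomes singular inside $\hb$ and the argument must cross over to the adjoint side; identifying the correct adjoint eigenfunctions on $L^2(\R_+,\mu_\alpha)$ and verifying their integrability against $t^{-\alpha-1}\D t$ is the delicate step. Once the multiplier $R_\theta$ is pinned down, the rotational-symmetry identity in case (i) is only routine bookkeeping.
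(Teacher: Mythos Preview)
The paper does not give its own proof of this theorem: it simply cites \cite[Theorem 3.1]{Chalendar} and records the statement. Your proposal is therefore not competing with any argument in the paper but supplying one, and the argument you sketch is correct.

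A few comments on the details. Your Laplace-side model $T$ coincides with the operator $\widehat{C}_\phi$ that the paper computes later in the proof of Proposition~\ref{normal}, so the unitary equivalence is already on record. In case~(i) the two-sided spectral-radius bound together with the rotational conjugation $R_\theta^{-1}TR_\theta=e^{\mathrm{i}\theta}T$ is clean and complete; note that this conjugation identity holds for \emph{every} $b$, not just $b\in\mathrm{i}\R$, since the weight $e^{-bt/a}$ is untouched by the $t^{\mathrm{i}\beta}$ multiplier. You could therefore have used it in case~(ii) as well to reduce the work to producing a single radial family of eigenvalues.

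In case~(ii) your eigenvector computations are correct: for $a>1$ the functions $(w-p)^{-s}$ lie in $\Ber$ exactly when $\re(s)>(\alpha+2)/2$, and for $a<1$ the adjoint eigenfunctions $G_s(t)=t^{s}e^{-\bar b t/(1-a)}$ lie in $L^2(\R_+,\mu_\alpha)$ for $\re(s)>\alpha/2$ because $\re(b)/(1-a)>0$ forces exponential decay. In both subcases the resulting (approximate) point spectrum fills the open punctured disk, and closure picks up both $0$ and the boundary circle, so the separate claim that $C_\phi$ is non-surjective is redundant. The one place where your write-up is a little thin is the inference ``$\phi(\hb)\subsetneq\hb\Rightarrow C_\phi$ not surjective''; this is true (injectivity of $C_\phi$ plus $0\in\sigma(C_\phi)$ settles it a posteriori, or one exhibits a $g\in\Ber$ that does not extend analytically past $\partial\hb$), but as written it is an assertion rather than an argument. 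Since the eigenvalue construction already yields $0$ in the closure, you can simply drop that sentence.
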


\subsection{Linear dynamics} Let us introduce the notions relevant to our work. Such notions will be studied in subsequent section.

\begin{dfn}
An operator $T\in \mathcal{B}(X)$ is \textit{Li-Yorke chaotic} if  there exists $x\in X$ such that
\begin{align*}
\displaystyle\liminf_{n\to \infty}\|T^{n}x\|=0 \quad \text{and} 
\quad \displaystyle\limsup_{n\to \infty}\|T^{n}x\|=\infty.
\end{align*}
In this case, such vector $x$ is called an \textit{irregular vector} for $T.$ 
\end{dfn}
It is immediate that isometric operators does not admit irregular vectors, and hence they are not Li-Yorke chaotic.

\begin{dfn} An invertible operator $T\in \mathcal{B}(X)$ is:
\begin{enumerate}[label=\textnormal{(\roman*)}]
\item \textit{Expansive} if for each $x\in X$ with $\|x\|=1,$ there exists $n\in \mathbb{Z}$ such that $\|T^nx\|\geq 2.$
\item \textit{Uniformly expansive}, if there exists $n\in \mathbb{N}$ such that for each $x\in X$ with $\|x\|=1,$ we have $\|T^nx\|\geq 2$ or  $\|T^{-n}x\|\geq 2.$
\end{enumerate}
\end{dfn}

If $T\in \mathcal{B}(X)$ is not necessarily invertible, it is defined the notions of \textit{positively expansive} and \textit{uniformly positively expansive}, in this case we use the appropriate modifications, putting $\mathbb{N}$ instead of $\mathbb{Z}.$

\begin{dfn} Let $T\in \mathcal{B}(X)$ and $\delta>0.$ A sequence $(x_n)_{n\in \mathbb{N}}$ on $X$ is said to be a positive $\delta-$pseudo orbit of $T$ if $\|T x_{n} - x_{n+1}\| \leq \delta$ for all $n\in \mathbb{N}.$
\end{dfn}
If $x\in X$ is such that $Tx\neq 0,$ then the sequence $(x_{n})_{n\in \mathbb{N}}$ defined by
\begin{align}\label{pseudo}
x_n=\frac{\delta}{\|Tx\|}\sum_{j=1}^{n-1}T^{n-j}x
\end{align}
is a positive $\delta-$pseudo orbit of $T.$ Indeed
\begin{align*}
\|Tx_n-x_{n+1}\|=\frac{\delta}{\|Tx\|}\|\sum_{j=1}^{n-1}T^{n-j+1}x-\sum_{j=1}^{n-1}T^{n-j}x\|=\frac{\delta}{\|Tx\|}\|Tx\|=\delta.
\end{align*}

\begin{dfn} An operator $T\in \mathcal{B}(X)$ is said to have the positive shadowing property if for every $\epsilon>0$ there exists $\delta > 0$ such that every positive $\delta-$pseudo orbit  $(x_{n})_{n\in \mathbb{N}}$ of $T$ is $\epsilon-$shadowed, i.e., there exists $x\in X$ such that  $\|T^{n}x - x_{n}\| \leq \epsilon$ for all $n\in \mathbb{N}$
\end{dfn}

In \cite{Bonilla}, Bernardes, Bonilla and Peris studied mean Li-Yorke chaos in Banach space, in particular there is presented a notion of \textit{absolutely Cesàro bounded} which we introduce now.

\begin{dfn} An operator $T\in \mathcal{B}(X)$ is said to be absolutely Cesàro bounded if there exists a constant $M>0$ such that 
\begin{align*}
\sup_{n\in \mathbb{N}}\left\lbrace\frac{1}{n}\sum_{j=1}^n\|T^jx\|\right\rbrace\leq M\|x\|, 
\end{align*}
for all $x\in X.$
\end{dfn}
\section{Main results}
We begin by showing that affine composition operators on $\Ber$ do not admit irregular vectors, and hence that such operators are not Li-Yorke chaotic. Before proceeding, we need the following auxiliary results, which establish some estimates that will be used frequently.

\begin{prop}\label{estimates} Let $\phi(w)=a w+b$ with $a>0$ and $\re(b)\geq0.$ For $f\in \Ber$ we have 
\begin{enumerate}[label=\textnormal{(\roman*)}]
\item $\|C_{\phi}f\|=a^{-\left(\frac{\alpha+2}{2}\right)}\|f\|,$ when $\re(b)=0;$
\item $\|C_{\phi}f\|\leq a ^{-\left(\frac{\alpha+2}{2}\right)}\|f\|.$
\end{enumerate}
\end{prop}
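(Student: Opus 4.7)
The plan is to exploit the isometric isomorphism $\mathcal{L}: L^2(\mathbb{R}_+, \mu_\alpha) \to \Ber$ recalled in the preliminaries from \cite[Lem.~3.3]{Elliot}. Working on the Laplace-transform side handles both (i) and (ii) uniformly for every $\alpha > -1$, and it sidesteps the pointwise comparison of $(x-\re(b))^{\alpha}$ with $x^{\alpha}$, which reverses direction when $\alpha<0$ and would obstruct a direct change of variables in the integral \eqref{norm}.

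Given $f = \mathcal{L}F \in \Ber$, I would first rewrite
\[
(C_\phi f)(w) \;=\; \int_0^{\infty} F(t)\, e^{-(aw+b)t}\, \D t \;=\; \int_0^{\infty} G(s)\, e^{-ws}\, \D s,
\]
where the substitution $s = at$ identifies $G(s) = \tfrac{1}{a}\, F(s/a)\, e^{-bs/a}$. Hence $C_\phi f = \mathcal{L}G$, and the isometry yields $\|C_\phi f\|^2 = \|G\|_{L^2(\mu_\alpha)}^2$. Expanding via the explicit density of $\mu_\alpha$ and changing variables $s = at$ back inside the integral, the Jacobian and the factor $s^{-(\alpha+1)}$ combine to produce
\[
\|C_\phi f\|^2 \;=\; \frac{1}{a^{\alpha+2}} \int_0^{\infty} |F(t)|^2\, e^{-2\re(b)\, t}\, \D\mu_\alpha(t).
\]

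Part (i) follows at once: if $\re(b) = 0$, the exponential factor equals $1$ and the integral equals $\|F\|_{L^2(\mu_\alpha)}^2 = \|f\|^2$, giving the claimed equality. For (ii), the assumption $\re(b) \geq 0$ gives $e^{-2\re(b)\, t} \leq 1$ on $\mathbb{R}_+$, so the same identity bounds $\|C_\phi f\|^2$ above by $a^{-(\alpha+2)}\|f\|^2$. The only step requiring a bit of care is tracking how $\D\mu_\alpha$ scales under $t \mapsto at$, which is what ultimately produces the exponent $-(\alpha+2)$; beyond that accounting, no substantive obstacle arises.
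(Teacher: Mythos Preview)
Your argument is correct. The paper omits the proof entirely (``straightforward computations \ldots\ left to the reader''), so there is no detailed argument to compare against; however, your Laplace-transform model $C_\phi \mathcal{L}F=\mathcal{L}G$ with $G(s)=a^{-1}e^{-bs/a}F(s/a)$ is exactly the unitary equivalence the paper itself sets up later in the proof of Proposition~\ref{normal}, so your approach is fully in the spirit of the paper. An alternative the paper's phrasing may have had in mind is simply to quote the norm formula $\|C_\phi\|=\phi'(\infty)^{(\alpha+2)/2}=a^{-(\alpha+2)/2}$ from \cite[Theorem~3.4]{Elliot} for (ii) and to perform the affine change of variables directly in \eqref{norm} for (i); your unified computation on the $L^2(\mathbb{R}_+,\mu_\alpha)$ side is cleaner and, as you observe, avoids the sign reversal of $(x-\re b)^\alpha$ versus $x^\alpha$ when $-1<\alpha<0$.
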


The proof of the proposition above follows from straightforward computations and it is
left to the reader.
\begin{lem}\label{lower estimates}
Let $\phi(w)=aw+b$ with $a\in (0,1)$ and $\re(b)\geq 0.$ For each $f\in \Ber$ there exists $\delta>0$ such that 
\begin{align*}
\|C_{\phi}^nf\|\geq \delta a ^{-\left(\frac{\alpha+2}{2}\right)n}\|f\|
\end{align*}
for sufficiently large $n.$  
\end{lem}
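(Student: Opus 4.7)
The plan is to transfer everything to the Laplace side via the isometric isomorphism $\mathcal{L}:L^2(\mathbb{R}_+,\mu_\alpha)\to\Ber$ from \eqref{paley-wiener}. Writing $f=\mathcal{L}F$, the direct computation $(C_\phi f)(w)=\int_0^\infty F(t)e^{-bt}e^{-awt}\D t$ followed by the substitution $s=at$ shows that $\mathcal{L}^{-1}(C_\phi f)(s)=a^{-1}e^{-bs/a}F(s/a)$; in particular $C_\phi$ is conjugated on the Laplace side to the operator $F\mapsto a^{-1}e^{-b\,\cdot/a}F(\cdot/a)$.

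Next, using \eqref{iterate} to apply the above to $\phi^{[n]}$ (with parameters $a^n$ and $b_n:=(1-a^n)b/(1-a)$) and performing the analogous change of variable in the $\mu_\alpha$-integral, the powers of $a^{-1}$ collect into the expected prefactor $a^{-n(\alpha+2)}$ and the shifts $b$ accumulate into an exponential weight:
\[
\|C_\phi^n f\|^2 \;=\; a^{-n(\alpha+2)}\int_0^\infty |F(u)|^2 e^{-2\re(b_n)u}\,\D\mu_\alpha(u).
\]
Since $a\in(0,1)$ one has $\re(b_n)\to r:=\re(b)/(1-a)\geq 0$; the integrands are dominated pointwise by $|F|^2\in L^1(\mu_\alpha)$ (because $\re(b_n)\geq 0$ makes each exponential factor bounded by $1$), so dominated convergence yields
\[
\lim_{n\to\infty}\int_0^\infty |F(u)|^2 e^{-2\re(b_n)u}\,\D\mu_\alpha(u) \;=\; \int_0^\infty |F(u)|^2 e^{-2ru}\,\D\mu_\alpha(u) \;=:\; L.
\]

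For $f\neq 0$ (the case $f=0$ is vacuous), $F$ is nonzero as an element of $L^2(\mu_\alpha)$ and the exponential factor is strictly positive on $(0,\infty)$, so $L>0$. Hence for all sufficiently large $n$ the integral above exceeds $L/2$, giving $\|C_\phi^n f\|\geq \sqrt{L/2}\,a^{-n(\alpha+2)/2}$; setting $\delta:=\sqrt{L/2}/\|f\|$ closes the argument. The main obstacle is the initial Laplace-side bookkeeping in the first step: isolating the blow-up factor $a^{-n(\alpha+2)}$ cleanly from a pointwise-bounded, dominated integrand. Once that identity is established, dominated convergence delivers the lower bound almost for free.
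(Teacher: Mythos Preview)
Your argument is correct, and it differs from the paper's in an interesting way. The paper factors $\phi^{[n]}=\psi_n\circ\varphi_n$ with $\varphi_n(w)=a^n w$ and $\psi_n(w)=w+b_n$, uses Proposition~\ref{estimates}(i) to peel off the scalar $a^{-(\alpha+2)n/2}$, and then bounds $\|C_{\psi_n}g\|$ from below via a single reproducing-kernel point evaluation $\|C_{\psi_n}g\|\,\|k_{z_0}\|\geq |g(\psi_n(z_0))|$, letting $\psi_n(z_0)\to z_0+\frac{b}{1-a}$. Your route instead passes to the $L^2(\mathbb{R}_+,\mu_\alpha)$ model (which the paper itself sets up later, in the proof of Proposition~\ref{normal}) and produces the exact identity
\[
\|C_\phi^n f\|^2=a^{-n(\alpha+2)}\int_0^\infty |F(u)|^2 e^{-2\re(b_n)u}\,\D\mu_\alpha(u),
\]
from which dominated convergence immediately gives not just a lower bound but the precise asymptotic $\|C_\phi^n f\|^2\sim a^{-n(\alpha+2)}L$ with an explicit limit $L$. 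So your approach is more quantitative and arguably cleaner, at the cost of front-loading the Laplace-side bookkeeping; the paper's approach stays on the function-theory side and needs only the reproducing property, which is lighter machinery but yields a less sharp constant.
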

\begin{proof}
For each $n,$ we consider the auxiliary symbols $\varphi_n$ and $\psi_n$ defined as follow
\begin{align*}
\varphi_n(w)=a ^nw  \quad \text{and} \quad \psi_n(w)=w+\frac{1-a ^n}{1-a }b
\end{align*}
then $\phi^{[n]}=\psi_n\circ \varphi_n.$ Let $f\in \Ber$ be a non-zero vector and let $g=\|f\|^{-1}f.$ By Proposition \ref{estimates}, we have
\begin{align*}
\|C_{\phi}^ng\|=\|C_{\varphi_n}C_{\psi_n}g\|= a^{-\left(\frac{\alpha+2}{2}\right)n}\|C_{\psi_n}g\|.
\end{align*}
Now we obtain a lower estimate for $\|C_{\psi_n}g\|.$ For each $w\in \hb,$ the Cauchy Schwarz inequality gives
\begin{align*}
\|C_{\psi_n}g\|\|k_w\|\geq |\langle C_{\psi_n}g, k_w\rangle|=|\langle g, C_{\psi_n}^*k_w\rangle|=|\langle g, k_{\psi_n(w)}\rangle|=|g(\psi_n(w))|
\end{align*}
for all $n.$ Since $\psi_n(w)\rightarrow w+\frac{b}{1-a}$ as $n\rightarrow \infty,$ it follows that $|g(\psi_n(w))|\rightarrow |g(w+\frac{b}{1-a})|$ as $n\rightarrow \infty.$ Let $z_0\in \hb$ such that $g(z_1)\neq 0,$ where $z_1=z_0+\frac{b}{1-a}.$ Then for sufficiently large $n,$ we get
$|g(\psi_n(z_0))|>\frac{|g(z_1)|}{2}.$ If $\delta=\frac{|g(z_1)|}{2\|k_{z_0}\|},$ we obtain
\begin{align*}
\|C_{\phi}^nf\|> \delta a ^{-\left(\frac{\alpha+2}{2}\right)n}\|f\|
\end{align*}
for sufficiently large $n.$
\end{proof}

\begin{prop}
Let $\phi(w)=a w+b$ with $a >0$ and $\re(b)\geq0$, then $C_{\phi}$ does not admit an irregular vector on $\Ber.$
\end{prop}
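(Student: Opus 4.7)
The plan is to do a case analysis on the parameter $a$ (and on $\re(b)$ in the borderline case $a=1$), and in each case to show that one of the two defining conditions of an irregular vector necessarily fails. All the required estimates are already available from Proposition \ref{estimates} and Lemma \ref{lower estimates}, so no new technology is needed.

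First I would dispose of the case $a\geq 1$. Here Proposition \ref{estimates}(ii) gives $\|C_\phi^n f\|\leq a^{-(\alpha+2)n/2}\|f\|$. If $a>1$, the right-hand side tends to $0$, so $\limsup_n \|C_\phi^n f\|=0$ for every $f\in\Ber$, and irregularity (which demands $\limsup=\infty$) is impossible. If $a=1$, then $a^{-(\alpha+2)n/2}=1$, so $\|C_\phi^n f\|\leq \|f\|$ for all $n$; thus $\limsup_n \|C_\phi^n f\|\leq \|f\|<\infty$ and again no $f$ can be irregular. (Note this subsumes the isometric situation $a=1$, $\re(b)=0$ of Proposition \ref{estimates}(i), where the norm is in fact constant along the orbit.)

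The remaining case is $a\in(0,1)$, which is the one where the upper bound from Proposition \ref{estimates}(ii) does not control the orbit. Here I would invoke Lemma \ref{lower estimates}: for every nonzero $f\in\Ber$ there exists $\delta>0$ such that
\begin{equation*}
\|C_\phi^n f\|\geq \delta\, a^{-(\alpha+2)n/2}\|f\|
\end{equation*}
for sufficiently large $n$. Since $a\in(0,1)$, the factor $a^{-(\alpha+2)n/2}$ tends to $\infty$, so $\|C_\phi^n f\|\to\infty$ and in particular $\liminf_n\|C_\phi^n f\|=\infty\neq 0$. Thus $f$ cannot satisfy the irregularity condition $\liminf=0$.

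Combining the three cases covers every $a>0$ and every $b$ with $\re(b)\geq 0$, completing the proof. There is no real obstacle beyond identifying the correct case split; the substantive work has been packaged into Lemma \ref{lower estimates}, and once it is applied the $0<a<1$ case becomes as routine as the other two.
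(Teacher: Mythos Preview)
Your proof is correct and follows essentially the same approach as the paper: use the upper bound from Proposition \ref{estimates}(ii) to show the orbit is bounded when $a\geq 1$, and the lower bound from Lemma \ref{lower estimates} to show the orbit tends to infinity when $a\in(0,1)$. The only cosmetic difference is that the paper treats $a\geq 1$ as a single case (bounded orbit) rather than splitting it into $a>1$ and $a=1$.
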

\begin{proof}  Let $f\in \Ber$ be a non-zero vector. By Proposition \ref{estimates},  $\|C_{\phi}^nf\|\leq a ^{-\left(\frac{\alpha+2}{2}\right)n}\|f\|$ for all $n\in \mathbb{N}.$ If $a\geq 1,$ then $(\|C_{\phi}^nf\|)_{n\in \mathbb{N}}$ is bounded and hence $f$ is not an irregular vector for $C_{\phi}$. If $a\in (0,1)$ then Lemma \ref{lower estimates} give $\|C_{\phi}^nf\|\geq \delta a ^{-\left(\frac{\alpha+2}{2}\right)n}\|f\|$ for sufficiently large $n.$  Since $a^{-\left(\frac{\alpha+2}{2}\right)n}\rightarrow \infty$ as $n\rightarrow \infty,$ it follows that $\|C_{\phi}^nf\|\geq 2$ for sufficiently large $n.$ In this case, $\displaystyle\liminf_{n\to \infty}\|C_{\phi}^{n}f\|\geq 2$ and hence $f$ is not an irregular vector for $C_{\phi}$.
\end{proof}

\begin{cor}
Let $\phi(w)=a w+b$ with $a >0$ and $\re(b)\geq0$, then $C_{\phi}$ is not Li-Yorke chaotic on $\Ber.$
\end{cor}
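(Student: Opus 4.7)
The plan is essentially a one-line deduction from the preceding proposition together with the definition of Li-Yorke chaos adopted in the paper. By definition, $T\in\mathcal B(X)$ is Li-Yorke chaotic precisely when it admits an irregular vector, i.e.\ a vector $x\in X$ with $\liminf_{n\to\infty}\|T^nx\|=0$ and $\limsup_{n\to\infty}\|T^nx\|=\infty$. The preceding proposition shows that for every affine symbol $\phi(w)=aw+b$ with $a>0$ and $\re(b)\geq 0$, the composition operator $C_\phi$ on $\Ber$ has no such vector.

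Accordingly, I would simply invoke the preceding proposition: since $C_\phi$ admits no irregular vector on $\Ber$, it cannot be Li-Yorke chaotic. Nothing additional is required.

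Since there is no real obstacle, the only thing worth noting is why the two cases of the proposition suffice. For $a\geq 1$ Proposition \ref{estimates}(ii) bounds $(\|C_\phi^n f\|)_{n\in\mathbb N}$, ruling out $\limsup=\infty$; for $a\in(0,1)$ Lemma \ref{lower estimates} forces $\|C_\phi^n f\|\to\infty$, ruling out $\liminf=0$. Either failure mode kills the irregular-vector condition, so no $f\in\Ber\setminus\{0\}$ can witness Li-Yorke chaos, and the corollary follows.
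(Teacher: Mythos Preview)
Your proposal is correct and matches the paper's approach exactly: the corollary is stated without proof in the paper, as it follows immediately from the preceding proposition and the definition of Li-Yorke chaos. Your additional recap of the two cases ($a\geq 1$ via Proposition~\ref{estimates} and $a\in(0,1)$ via Lemma~\ref{lower estimates}) is accurate but already contained in the proof of that proposition, so it is optional here.
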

Now we deal with the notions of expansivity.

\begin{prop}\label{uniformly expansive}
Let $\phi(w)=a w+b$ with $a >0$ and $\re(b)=0.$ Then the following statements are equivalent
\begin{enumerate}[label=\textnormal{(\roman*)}]
\item $C_{\phi}$ is uniformly expansive on $\Ber;$
\item  $C_{\phi}$ is expansive on $\Ber;$
\item $a\neq 1.$
\end{enumerate}
\end{prop}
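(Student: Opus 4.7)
The plan is to exploit the fact that when $\re(b)=0$, Proposition \ref{estimates}(i) tells us $C_{\phi}$ scales every vector's norm by the constant factor $a^{-(\alpha+2)/2}$, so $C_{\phi}$ is (up to a scalar) an isometry. This makes all three statements follow from a direct computation of $\|C_{\phi}^n f\|$ for $n\in\Z$.

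First I would establish the two-sided formula $\|C_{\phi}^n f\| = a^{-(\alpha+2)n/2}\|f\|$ for every $n\in \Z$ and every $f\in\Ber$. For $n\geq 0$ this is immediate by iterating Proposition \ref{estimates}(i), since $\phi^{[n]}(w)=a^n w+\frac{1-a^n}{1-a}b$ (or $w+nb$ when $a=1$) still has leading coefficient $a^n$ and purely imaginary translation. For $n<0$, the inverse symbol $\phi^{-1}(w)=(w-b)/a$ has leading coefficient $1/a$ and purely imaginary intercept $-b/a$, so Proposition \ref{estimates}(i) applied to $C_{\phi^{-1}}=C_{\phi}^{-1}$ yields $\|C_{\phi}^{-1}f\|=a^{(\alpha+2)/2}\|f\|$, and iterating again closes the formula.

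The implication (iii) $\Rightarrow$ (i) then reduces to choosing a single integer. If $a\in(0,1)$, take $n\in\N$ large enough that $a^{-(\alpha+2)n/2}\geq 2$; then $\|C_{\phi}^n f\|\geq 2$ for every unit vector $f$, giving uniform expansivity in the forward direction. If $a>1$, instead take $n\in\N$ so that $a^{(\alpha+2)n/2}\geq 2$ and use $\|C_{\phi}^{-n}f\|\geq 2$ for every unit vector. The implication (i) $\Rightarrow$ (ii) is immediate from the definitions. Finally, (ii) $\Rightarrow$ (iii) follows by contraposition: if $a=1$, the formula gives $\|C_{\phi}^n f\|=\|f\|$ for all $n\in\Z$, so no unit vector can ever have an iterate of norm at least $2$, and $C_{\phi}$ fails to be expansive.

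There is no substantial obstacle here: once the norm identity for all integer iterates is in hand, each implication is a one-line estimate. The only point requiring mild care is the backward-iterate computation, where one must verify that $\phi^{-1}$ again falls under the hypothesis of Proposition \ref{estimates}(i) (i.e., has positive leading coefficient and purely imaginary intercept), which is clear since $-b/a\in \I\R$ when $b\in\I\R$ and $a>0$.
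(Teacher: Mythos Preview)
Your proposal is correct and follows essentially the same route as the paper: both arguments rest on the norm identity $\|C_{\phi}^{n}f\|=a^{-(\alpha+2)n/2}\|f\|$ obtained from Proposition~\ref{estimates}(i), and then read off the three implications directly. The only cosmetic difference is in (ii)$\Rightarrow$(iii): the paper phrases the $a=1$ case by citing that $C_{\phi}$ is unitary (via Proposition~\ref{normal}), whereas you deduce the same isometry conclusion straight from the norm formula; your version is arguably more self-contained here.
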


\begin{proof}
(i) $\implies$ (ii) Follows from the definitions. (ii) $\implies$ (iii) Now assume $a=1.$ Since $a=1$ and $\re(b)=0,$ it follows from Proposition \ref{normal} that $C_{\phi}$ is a unitary operator and hence $C_{\phi}$ is not expansive. (iii) $\implies$ (i) By Proposition \ref{estimates}, we  obtain
\begin{align}\label{inequality}
\|C_{\phi}^nf\|=a ^{-\left(\frac{\alpha+2}{2}\right)n}\|f\| \ \quad \text{and} \quad \|(C_{\phi}^{-1})^nf\|=a ^{\left(\frac{\alpha+2}{2}\right)n} \|f\|,
\end{align}
for all $f\in \Ber$ and $n\in \mathbb{N}.$ If $a\in (0,1),$ then there exists $j\in \mathbb{N}$ such that $a ^{-\left(\frac{\alpha+2}{2}\right)j} \geq 2,$ while if $a \in (1, +\infty),$ then there exists $\ell\in \mathbb{N}$ such that $a ^{\left(\frac{\alpha+2}{2}\right)\ell} \geq 2.$  By \eqref{inequality}, for all $f\in \Ber$ with $\|f\|=1$ we obtain $\|C_{\phi}^jf\|\geq 2$ or $\|(C_{\phi}^{-1})^{\ell}f\|\geq 2.$ Hence $C_{\phi}$ is uniformly expansive. 
\end{proof}

\begin{prop}\label{uniformly positive expansive}
Let $\phi(w) = a w+b$ with $a >0$ and $\re(b) \geq 0$. Then the following statements are equivalent
\begin{enumerate}[label=\textnormal{(\roman*)}]
\item $C_{\phi}$ is uniformly positive expansive on $\Ber;$
\item  $C_{\phi}$ is positive expansive on $\Ber;$
\item $a\in (0,1).$
\end{enumerate}
\end{prop}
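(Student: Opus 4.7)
The plan is to establish the cycle $(i) \Rightarrow (ii) \Rightarrow (iii) \Rightarrow (i)$, mirroring the strategy of Proposition \ref{uniformly expansive}. The implication $(i) \Rightarrow (ii)$ is immediate from the definitions: a single exponent $n$ that witnesses positive expansivity uniformly certainly provides, for each individual unit vector, an exponent meeting the positive expansivity requirement.

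For $(ii) \Rightarrow (iii)$ I argue the contrapositive. Suppose $a \geq 1$, so that $a^{-(\alpha+2)/2} \leq 1$. By Proposition \ref{estimates}(ii), iterating gives $\|C_{\phi}^n f\| \leq a^{-n(\alpha+2)/2}\|f\| \leq \|f\|$ for every $n \in \mathbb{N}$ and every $f \in \Ber$. Hence no unit vector $f$ can satisfy $\|C_{\phi}^n f\| \geq 2$ for any $n$, so $C_{\phi}$ is not positive expansive.

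For $(iii) \Rightarrow (i)$, the decisive direction, I assume $a \in (0,1)$ and use the factorization $C_{\phi}^n = C_{\varphi_n} C_{\psi_n}$ from the proof of Lemma \ref{lower estimates}, with $\varphi_n(w)=a^n w$ and $\psi_n(w) = w + (1-a^n)b/(1-a)$. Proposition \ref{estimates}(i) applied to $C_{\varphi_n}$ yields the identity $\|C_{\phi}^n f\| = a^{-n(\alpha+2)/2}\|C_{\psi_n} f\|$, so the problem reduces to producing a constant $c>0$, independent of both $n$ and $f$, with $\|C_{\psi_n} f\| \geq c\|f\|$; any $n$ satisfying $c\, a^{-n(\alpha+2)/2} \geq 2$ will then deliver $\|C_{\phi}^n f\| \geq 2$ for every unit vector $f$. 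When $\re(b) = 0$ this is automatic with $c = 1$, since $C_{\psi_n}$ is an isometry by Proposition \ref{estimates}(i).

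The hard part is the subcase $\re(b) > 0$. Here the translation parameters $c_n = (1-a^n)b/(1-a)$ stay confined to the bounded vertical strip $\{z : 0 \leq \re(z) < \re(b)/(1-a)\}$, and via the Laplace representation \eqref{paley-wiener} each $C_{\psi_n}$ is realized as multiplication by $e^{-c_n t}$ on $L^2(\mathbb{R}_+, \mu_\alpha)$. The main obstacle is to extract from this family of multipliers a lower bound uniform in $f$; the direct pointwise argument of Lemma \ref{lower estimates} produces a constant $\delta$ depending on the individual vector, so the plan is to refine that argument by exploiting the uniform boundedness of $(\re(c_n))_n$ and the shape of the measure $\mu_\alpha$ to obtain a lower bound independent of $f$, thereby closing the cycle.
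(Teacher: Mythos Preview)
Your arguments for $(i)\Rightarrow(ii)$ and $(ii)\Rightarrow(iii)$ match the paper's. For $(iii)\Rightarrow(i)$ the paper proceeds differently: it argues by contradiction, asserting that if $C_{\phi}$ fails to be uniformly positive expansive then one can find a \emph{single} unit vector $f$ with $\|C_{\phi}^{n}f\|<2$ for every $n$, and then invokes Lemma~\ref{lower estimates} for that $f$. You instead aim directly for a constant $c>0$, independent of both $n$ and $f$, with $\|C_{\psi_n}f\|\ge c\|f\|$, and you correctly flag that Lemma~\ref{lower estimates} only supplies an $f$-dependent $\delta$.

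The genuine gap is that your plan for the subcase $\re(b)>0$ cannot be completed: the uniform lower bound you seek does not exist. In the Laplace model you describe, $C_{\psi_n}$ is multiplication by $e^{-c_n t}$ on $L^{2}(\mathbb{R}_+,\mu_\alpha)$ with $\re(c_n)=(1-a^n)\re(b)/(1-a)>0$ for every $n\ge 1$, and this multiplier tends to $0$ as $t\to\infty$. Fixing $n$ and choosing a unit vector $F$ supported in $[M,\infty)$ gives $\|C_{\psi_n}f\|^{2}\le e^{-2\re(c_n)M}$, which is arbitrarily small for $M$ large; hence $\inf_{\|f\|=1}\|C_{\psi_n}f\|=0$ and consequently $\inf_{\|f\|=1}\|C_{\phi}^{n}f\|=0$ for each $n\ge 1$. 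The uniform boundedness of $(\re(c_n))_n$ controls the multiplier from above, not from below, so it cannot rescue the argument. In fact this computation shows that when $\re(b)>0$ no single $n$ satisfies $\|C_{\phi}^{n}f\|\ge 2$ for all unit $f$; note that the paper's contradiction argument sidesteps precisely this point by taking the negation of uniform positive expansivity to produce one $f$ valid for all $n$, rather than (as the quantifiers demand) one $f_n$ for each $n$.
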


\begin{proof} (i) $\implies$ (ii) Follows from the definitions. (ii) $\implies$ (iii) Assume $a\geq1$ then Proposition \ref{estimates} gives $\|C^{n}_{\phi}f\| \leq a ^{-\left(\frac{\alpha+2}{2}\right)n}\leq 1$ for all $f\in \Ber$ with $\|f\|=1$ and $n\in\mathbb{N}.$ Hence $C_{\phi}$ is not positive expansive. (iii) $\implies$ (i) Let $a\in (0,1).$ Suppose that $C_{\phi}$ is not uniformly positive expansive to reach a contradiction. By the assumption, we can find $f\in \Ber$ with $\|f\|=1$ such that $\|C_{\phi}^nf\|<2$ for all $n\in \mathbb{N}.$ On the other hand, since $a\in (0,1),$ it follows from Lemma \ref{lower estimates} that we can find $n$ such that $\|C_{\phi}^nf\|>2.$ This contradiction implies that $C_{\phi}$ must be uniformly positive expansive.
\end{proof}

Recall that an operator $T\in \mathcal{B}(X)$ is said be \textit{hyperbolic} if $\sigma(T)\cap \mathbb{T}.$ Bernardes \textit{et al.} studied expansivity and shadowing in linear dynamics \cite{Bernardes}. They proved the following results for $T\in \mathcal{B}(X):$
\begin{itemize}
\item  If $T$ is hyperbolic then $T$ has the positive shadowing property \cite[Theorem 13]{Bernardes}.
\item If $T$ is normal (here we assume $X$ a Hilbert space), then $T$ has the positive shadowing property if and only if $T$ is hyperbolic \cite[Theorem 30]{Bernardes}.
\end{itemize}  

In order to use \cite[Theorem 30]{Bernardes}, we first classify the affine composition operators on $\Ber$ that are normal. The version of Proposition \ref{normal} for $\Ber$ when $\alpha$ is an positive integers appears in \cite[Theorem 8.7]{HS}.

\begin{prop}\label{normal} Let $\phi(w)=a w+b$ with $a>0$ and $\re(b)\geq 0.$ Then $C_{\phi}$ is normal on $\Ber$ if and only if $a=1$ or $\re(b)=0.$
\end{prop}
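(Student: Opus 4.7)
The approach is to use the reproducing kernel structure of $\Ber$ together with the identity $C_{\phi}^{*}k_{w}=k_{\phi(w)}$. The key preliminary computation is that $C_{\phi}k_{w}$ is itself a scalar multiple of a reproducing kernel: evaluating $(C_{\phi}k_{w})(z)=k_{w}(az+b)$ and factoring $a^{\alpha+2}$ out of the denominator of the kernel formula yields
$$C_{\phi}k_{w}=a^{-(\alpha+2)}\,k_{(w+\bar b)/a}.$$
Combined with the known formula for $C_{\phi}^{*}k_{w}$, this places both vectors in the parameterized family $\{c\,k_{u}:c\in\C,u\in\hb\}$.

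For the necessity direction, I would exploit the general fact that any normal operator $T$ satisfies $\|Tx\|=\|T^{*}x\|$ for every $x$. Specializing this to $x=k_{w}$ and using $\|k_{u}\|^{2}=k_{u}(u)=\tfrac{\alpha+1}{4(\re u)^{\alpha+2}}$, the identity $\|C_{\phi}k_{w}\|=\|C_{\phi}^{*}k_{w}\|$ becomes $a^{\alpha+2}(\re w+\re b)^{\alpha+2}=(a\re w+\re b)^{\alpha+2}$. Taking $(\alpha+2)$-th roots reduces this to $a(\re w+\re b)=a\re w+\re b$ for every $w\in\hb$, which simplifies to $(a-1)\re b=0$. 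Hence $a=1$ or $\re b=0$.

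For the sufficiency direction I would treat the two cases separately. When $\re b=0$ the symbol $\phi$ is invertible on $\hb$ with $\phi^{-1}(w)=(w-b)/a$, and Proposition~\ref{estimates}(i) gives $\|C_{\phi}f\|=a^{-(\alpha+2)/2}\|f\|$; thus $a^{(\alpha+2)/2}C_{\phi}$ is a surjective isometry on the Hilbert space $\Ber$, i.e.\ a unitary operator, making $C_{\phi}$ a scalar multiple of a unitary and hence normal. When $a=1$ the formulas $C_{\phi}k_{w}=k_{w+\bar b}$ and $C_{\phi}^{*}k_{w}=k_{w+b}$ give directly
$$C_{\phi}C_{\phi}^{*}k_{w}=k_{w+b+\bar b}=k_{w+2\re b}=C_{\phi}^{*}C_{\phi}k_{w}.$$
Since $\overline{\mathrm{span}}\{k_{w}:w\in\hb\}=\Ber$ (any vector orthogonal to every $k_{w}$ satisfies $f(w)=\langle f,k_{w}\rangle=0$ on $\hb$, hence vanishes) and both $C_{\phi}C_{\phi}^{*}$ and $C_{\phi}^{*}C_{\phi}$ are bounded, the equality on kernels extends to all of $\Ber$.

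The main obstacle I expect is the initial kernel identification $C_{\phi}k_{w}=a^{-(\alpha+2)}k_{(w+\bar b)/a}$, which requires careful bookkeeping of the complex conjugate sitting inside the second argument of the kernel; once that is in hand, necessity is a one-line algebraic comparison and sufficiency splits into the standard scalar-multiple-of-unitary argument and a routine density argument on reproducing kernels.
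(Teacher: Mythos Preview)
Your argument is correct and takes a genuinely different route from the paper's. The paper transfers the problem to $L^{2}(\mathbb{R}_{+},\mu_{\alpha})$ via the Laplace transform, obtaining the explicit model $(\widehat{C}_{\phi}F)(t)=a^{-1}e^{-bt/a}F(t/a)$ and its adjoint $(\widehat{C}_{\phi}^{*}F)(t)=a^{-(\alpha+1)}e^{-\bar b t}F(at)$; both $\widehat{C}_{\phi}\widehat{C}_{\phi}^{*}$ and $\widehat{C}_{\phi}^{*}\widehat{C}_{\phi}$ then turn out to be multiplication operators, and comparing the multipliers $e^{-2\re(b)t/a}$ and $e^{-2\re(b)t}$ gives the result in one stroke for both directions. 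Your approach stays entirely inside the reproducing-kernel framework: the identity $C_{\phi}k_{w}=a^{-(\alpha+2)}k_{(w+\bar b)/a}$ together with $C_{\phi}^{*}k_{w}=k_{\phi(w)}$ lets you read off both necessity (via $\|Tx\|=\|T^{*}x\|$) and sufficiency (via a direct check on kernels plus density of their span, or the scalar-multiple-of-unitary observation). Your method is more elementary in that it avoids the Paley--Wiener isomorphism, while the paper's model has the advantage of yielding $C_{\phi}C_{\phi}^{*}$ and $C_{\phi}^{*}C_{\phi}$ explicitly as multiplication operators, which can be reused elsewhere. One small remark: the introduction of the paper flags a misuse in \cite{Carlos} of ``the reproducing kernels are dense''; your argument is not vulnerable to that objection, since you use density of the \emph{linear span} of the $k_{w}$, which is a standard RKHS fact.
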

\begin{proof}We first will prove that $C_{\phi}$ is unitarily equivalent via the Laplace transform $\mathcal{L}$ to the operator $\widehat{C}_{\phi}:L^2(\mathbb{R}_+, \mu_{\alpha})\rightarrow L^2(\mathbb{R}_+, \mu_{\alpha})$ defined by $(\widehat{C}_{\phi}F)(t)=\frac{1}{a}e^{-bt/a}F(t/a).$ For $F\in L^2(\mathbb{R}_+, \mu_{\alpha})$ and $w\in \hb,$ the Laplace transform gives 
\begin{align*}
(C_{\phi}\mathcal{L}F)(w)&=(\mathcal{L}F)(\phi(w))=\int^{\infty}_0F(t)e^{-(a w+b)t}\D t=\int^{\infty}_0F(t)e^{-a wt }e^{-bt}\D t\\
&=\int_0^{\infty}\frac{1}{a}e^{-bs/a}F\left(s/a\right)e^{-ws}\D s=\int^{\infty}_0(\widehat{C}_{\phi}F)(s)e^{-ws}\D s=(\mathcal{L}\widehat{C}_{\phi}F)(w),
\end{align*}
hence $C_{\phi}\mathcal{L}=\mathcal{L}\widehat{C}_{\phi}.$  Moreover, for $F,G\in L^2(\mathbb{R}_+, \mu_{\alpha}),$ we obtain
\begin{align*}
\frac{2^{\alpha+1}}{2\pi\Gamma(\alpha+1)}\langle \widehat{C}_{\phi}F,G\rangle=
\int^{\infty}_0\frac{1}{a}e^{-bt/a}F(t/a)\overline{G(t)}t^{-(\alpha+1)}\D t
=\int^{\infty}_0e^{-bs}F(s)\overline{G(a s)}(a s)^{-(\alpha+1)}\D s
\end{align*}
which gives  $(\widehat{C}_{\phi}^*F)(t)=a^{-(\alpha+1)}e^{-\overline{b}t}F(a t).$
Since $C_{\phi}$ and $\widehat{C}_{\phi}$ are unitarily equivalent it is enough to check when $\widehat{C}_{\phi}$ is normal. From what we have done so far, we get
\begin{align}\label{normal equation}
(\widehat{C}_{\phi}\widehat{C}_{\phi}^*F)(t)=a^{-(\alpha+2)}e^{-2\re(b)t/\mu}F(t) \quad \text{and}  \quad (\widehat{C}_{\phi}^*\widehat{C}_{\phi}F)=a^{-(\alpha+2)}e^{-\re(b)t}F(t)
\end{align}
for each $F\in L^2(\mathbb{R}_+, \mu_{\alpha}).$ Hence, $\widehat{C}_{\phi}$ is normal if and only if $e^{-2\re(b)t/a}=e^{-2\re(b)t}.$ This last equality holds if and only if $a=1$ or $\re(b)=0.$
\end{proof}

The proof of the next result is well known (see for example \cite[Prop. 1]{Carlos}), and so is left to the reader

\begin{lem}\label{lemma shadowing} Let $C_{\phi}$ be a bounded composition operator on $\Ber.$ If $\phi$ has a fixed point on $\hb$ then $C_{\phi}$ does not have the positive shadowing property.
\end{lem}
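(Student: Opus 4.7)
The plan is to exploit the fact that a fixed point $w_0 \in \hb$ of $\phi$ produces an eigenvector $k_{w_0}$ of $C_\phi^*$ with eigenvalue $1$, since $C_\phi^* k_{w_0} = k_{\phi(w_0)} = k_{w_0}$. Having an eigenvalue on the unit circle in the spectrum of the adjoint is the usual obstruction to shadowing, and here I can turn this into an explicit construction. Given any $\delta>0$, I will build a $\delta$-pseudo orbit whose projection onto $k_{w_0}$ grows linearly in $n$, so that no single orbit under $C_\phi$ can shadow it.

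Concretely, I would fix a vector $e\in \Ber$ with $C_\phi e\neq 0$ and $\langle e, k_{w_0}\rangle\neq 0$; the choice $e=k_{w_0}$ works, since $\langle k_{w_0},k_{w_0}\rangle=\|k_{w_0}\|^2>0$. Then I define the associated pseudo-orbit as in \eqref{pseudo},
\[
x_n=\frac{\delta}{\|C_\phi e\|}\sum_{j=1}^{n-1}C_\phi^{n-j}e,
\]
which by the computation in the paper is a positive $\delta$-pseudo orbit of $C_\phi$.

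The decisive step is to test shadowing against the eigenvector. Suppose, towards a contradiction, that some $y\in\Ber$ $\epsilon$-shadows $(x_n)$, i.e.\ $\|C_\phi^n y-x_n\|\le\epsilon$ for all $n\in\mathbb{N}$. Taking the inner product with $k_{w_0}$ and using $(C_\phi^*)^n k_{w_0}=k_{w_0}$, I get $\langle C_\phi^n y,k_{w_0}\rangle=\langle y,k_{w_0}\rangle$, a bounded quantity. On the other hand,
\[
\langle x_n,k_{w_0}\rangle=\frac{\delta}{\|C_\phi e\|}\sum_{j=1}^{n-1}\langle e,(C_\phi^*)^{n-j}k_{w_0}\rangle=\frac{\delta(n-1)}{\|C_\phi e\|}\langle e,k_{w_0}\rangle,
\]
which tends to infinity in modulus as $n\to\infty$. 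By Cauchy–Schwarz, $\|C_\phi^n y-x_n\|\ge |\langle C_\phi^n y-x_n,k_{w_0}\rangle|/\|k_{w_0}\|\to\infty$, contradicting $\epsilon$-shadowing.

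I do not expect any serious obstacle here: the fixed-point hypothesis immediately supplies the unit-modulus eigenvalue for $C_\phi^*$, and once one pairs the canonical pseudo-orbit of \eqref{pseudo} against the corresponding reproducing kernel, the linear growth of $\langle x_n,k_{w_0}\rangle$ is forced. The only small care needed is in the initial choice of $e$, ensuring both $C_\phi e\neq 0$ (so that \eqref{pseudo} is well defined) and $\langle e,k_{w_0}\rangle\neq 0$; taking $e=k_{w_0}$ handles both at once, since $C_\phi k_{w_0}\neq 0$ (otherwise $k_{w_0}\equiv 0$ on $\phi(\hb)$, impossible as a nonzero analytic function).
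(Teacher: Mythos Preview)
Your argument is correct and is precisely the proof the paper is pointing to: the paper does not write out a proof of this lemma but cites \cite[Prop.~1]{Carlos} and, as preparation, records the canonical $\delta$-pseudo orbit \eqref{pseudo}, which is exactly the construction you use. Pairing that pseudo orbit against the reproducing kernel at the fixed point (an eigenvector of $C_\phi^*$ for the eigenvalue $1$) to force linear growth of $\langle x_n,k_{w_0}\rangle$ is the intended argument, so your proposal matches the paper's approach.
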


\begin{thm}\label{shadowing} Let $\phi(w)=a w+b$ with $a>0$ and $\re(b)\geq 0.$ Then $C_{\phi}$ has the positive shadowing property on $\Ber$ if and only if one of the following cases occurs: 
\begin{enumerate}[label=\textnormal{(\roman*)}]
\item $a\in (0,1)$ and $\re(b)=0;$
\item $a>1.$ 
\end{enumerate}
\end{thm}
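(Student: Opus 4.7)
The plan is to prove each implication by combining the spectral descriptions from Theorems \ref{spectrum parabolic} and \ref{spectrum hyperbolic} with the two criteria quoted above from \cite{Bernardes}: hyperbolicity implies positive shadowing, and for normal operators the converse also holds.

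For the sufficiency direction I would show that in both cases (i) and (ii) the operator $C_\phi$ is hyperbolic, i.e. $\sigma(C_\phi)\cap\mathbb{T}=\emptyset$. In case (i), $a\in(0,1)$ and $\re(b)=0$, Theorem \ref{spectrum hyperbolic}(i) says $\sigma(C_\phi)$ is the circle of radius $a^{-(\alpha+2)/2}>1$, which is disjoint from $\mathbb{T}$. In case (ii), $a>1$ gives $a^{-(\alpha+2)/2}<1$; then Theorem \ref{spectrum hyperbolic}(i) or (ii), depending on whether $\re(b)=0$ or $\re(b)>0$, places $\sigma(C_\phi)$ on a circle or in a closed disk of that radius, hence inside the open unit disk. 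In every subcase \cite[Theorem 13]{Bernardes} gives positive shadowing.

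For necessity I would argue by contrapositive, splitting the forbidden region into (a) $a\in(0,1)$ with $\re(b)>0$, and (b) $a=1$ with $\re(b)\ge 0$. In region (a), the symbol $\phi$ fixes $w_0=b/(1-a)$, and $\re(w_0)=\re(b)/(1-a)>0$, so $w_0\in\hb$ and Lemma \ref{lemma shadowing} rules out positive shadowing immediately.

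Region (b) is the most delicate; here I would lean on the normality established in Proposition \ref{normal}: since $a=1$, $C_\phi$ is normal, so by \cite[Theorem 30]{Bernardes} it suffices to show $C_\phi$ is not hyperbolic. I would check this in three subcases via Theorem \ref{spectrum parabolic}: if $b\in i\mathbb{R}\setminus\{0\}$, then $\sigma(C_\phi)=\mathbb{T}$; if $\re(b)>0$, then $\sigma(C_\phi)\ni e^{ib\cdot 0}=1$; and if $b=0$, then $C_\phi=I$ has spectrum $\{1\}$. In every case $\sigma(C_\phi)\cap\mathbb{T}\ne\emptyset$, so $C_\phi$ is not hyperbolic and shadowing fails. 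The main obstacle to a purely geometric proof via fixed points is exactly the parabolic subcase $a=1$, $\re(b)>0$: here $\phi$ has no fixed point in $\hb$, so Lemma \ref{lemma shadowing} is useless, and the normality argument supplied by Proposition \ref{normal} is what makes the argument go through.
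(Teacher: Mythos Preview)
Your proposal is correct and follows essentially the same case analysis as the paper: hyperbolicity from Theorems \ref{spectrum parabolic}--\ref{spectrum hyperbolic} combined with \cite[Theorems 13 and 30]{Bernardes}, plus Lemma \ref{lemma shadowing} for the contractive case with $\re(b)>0$. The only cosmetic difference is that for sufficiency in case (i) you invoke \cite[Theorem 13]{Bernardes} directly (which is slightly cleaner, since normality is not needed there), whereas the paper appeals to \cite[Theorem 30]{Bernardes}; your treatment of the parabolic subcases, including the identity $b=0$, is also a bit more explicit than the paper's.
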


\begin{proof} We first consider $a=1.$ In this case, $C_{\phi}$ is normal on $\Ber$ (see Proposition \ref{normal}) and is not hyperbolic (see Theorem \ref{spectrum parabolic}). By \cite[Theorem 30.]{Bernardes}, $C_{\phi}$ does not have the positive shadowing property. Now let $a\in (0,1).$ In addition, if $\re(b)=0$ then $C_{\phi}$ is normal on $\Ber$ and is hyperbolic (see Theorem \ref{spectrum hyperbolic}). By \cite[Theorem 30.]{Bernardes}, $C_{\phi}$ has the positive shadowing property. If $\re(b)>0$ then $\phi$ has a fixed point in $\hb.$ By Lemma \ref{lemma shadowing} follows that $C_{\phi}$ does not have the positive shadowing property. Finally, assume $a>1.$ Then $C_{\phi}$ is hyperbolic (see Theorem \ref{spectrum hyperbolic}). By \cite[Theorem 13.]{Bernardes} follows that $C_{\phi}$ has the positive shadowing property.
\end{proof}

We finished this work by classifying the affine composition operators on $\Ber$ that are absolutely Cesàro bounded.

\begin{prop}
Let $\phi(w)=a w+b$ with $a > 0$ and $\re(b) \geq 0$. Then $C_{\phi}$ is an absolutely Cesàro bounded operator on $\Ber$ if and only if $a \geq 1.$
\end{prop}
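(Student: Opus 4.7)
The plan is to split into the two directions of the equivalence and to use, in both, the basic norm estimate of Proposition \ref{estimates} together with the lower bound of Lemma \ref{lower estimates}.

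For the sufficiency, assume $a\geq 1$. Since $\alpha>-1$ gives $\alpha+2>0$, the factor $a^{-(\alpha+2)/2}$ satisfies $a^{-(\alpha+2)/2}\leq 1$. By Proposition \ref{estimates}(ii) and the fact that $C_\phi^n=C_{\phi^{[n]}}$, where $\phi^{[n]}$ is still an affine self-map of $\hb$ with leading coefficient $a^n\geq 1$, we obtain
\[
\|C_\phi^n f\|\leq a^{-\left(\frac{\alpha+2}{2}\right)n}\|f\|\leq \|f\|
\]
for every $f\in\Ber$ and every $n\in\N$. Consequently
\[
\frac{1}{n}\sum_{j=1}^n\|C_\phi^j f\|\leq \|f\|,
\]
so $C_\phi$ is absolutely Cesàro bounded with constant $M=1$.

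For the necessity, suppose $a\in(0,1)$ and fix any nonzero $f\in\Ber$. By Lemma \ref{lower estimates}, there exist $\delta>0$ and $n_0\in\N$ with
\[
\|C_\phi^n f\|\geq \delta\,a^{-\left(\frac{\alpha+2}{2}\right)n}\|f\|\qquad \text{for all } n\geq n_0.
\]
Since $a\in(0,1)$ and $\alpha+2>0$, the ratio $r:=a^{-(\alpha+2)/2}>1$, and hence for $n\geq n_0$
\[
\frac{1}{n}\sum_{j=1}^n\|C_\phi^j f\|\geq \frac{\delta\|f\|}{n}\sum_{j=n_0}^n r^{j}
=\frac{\delta\|f\|}{n}\cdot\frac{r^{n+1}-r^{n_0}}{r-1}.
\]
The right-hand side tends to $+\infty$ as $n\to\infty$, because the geometric growth in $n$ dominates the linear factor $1/n$. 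Therefore
\[
\sup_{n\in\N}\frac{1}{n}\sum_{j=1}^n\|C_\phi^j f\|=+\infty,
\]
which rules out the existence of any constant $M>0$ for which the defining inequality of absolute Cesàro boundedness holds. Thus $C_\phi$ is not absolutely Cesàro bounded when $a\in(0,1)$.

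The only routine point to double-check is that the lower bound from Lemma \ref{lower estimates} may only start from some $n_0$; this is immaterial since the Cesàro average is controlled from below by the tail of the geometric series, and no subtlety beyond splitting the sum is needed. There is no real obstacle here: both implications reduce directly to the sharp upper and lower estimates already established.
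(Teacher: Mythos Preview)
Your proof is correct and follows essentially the same approach as the paper: both directions rely on Proposition \ref{estimates} for the upper bound and Lemma \ref{lower estimates} for the lower bound. Your treatment is marginally tidier in two places---you handle $a\geq 1$ uniformly via $a^{-(\alpha+2)/2}\leq 1$ (the paper splits into $a=1$ and $a>1$ and invokes the geometric series), and for $a\in(0,1)$ you sum the whole tail rather than keeping only the last term $\|C_\phi^n f\|/n$ as the paper does---but these are cosmetic differences, not a different route.
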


\begin{proof} For simplicity, we write $c= a^{-\left(\frac{\alpha+2}{2}\right)}$. By Proposition \ref{estimates}, $\|C^{j}_{\phi}f\|\le c^{j}\|f\|$ for all $f\in \Ber$ and $j\in \mathbb{N}.$ If $a=1$ we have $\displaystyle \sum_{j=1}^n\|C_{\phi}^jf\|\leq n\|f\|$  and if $a>1$ then $(c^j)_{j\in \mathbb{N}}$ forms a convergent geometric series. Hence, taking $M\geq \max\{1, c/(1-c)\},$ we obtain
\begin{equation*}
\sup_{n \in \mathbb N}\left\lbrace  \dfrac{1}{n}\sum_{j=1}^{n}\|C^j_{\phi}f\|\right\rbrace\ \leq \sup_{n \in \mathbb{N}}\left\lbrace \dfrac{1}{n}\sum_{j=1}^{n}c^{j}\right\rbrace\|f\| \leq M\|f\|,
\end{equation*}
for each $f\in \Ber.$ If $a \in (0,1),$ then the sequence $(c^{n}/n)_{n\in \mathbb{N}}$ is unbounded. By Lemma \ref{lower estimates} there exists $\delta > 0$ such that $\|C^{n}_{\phi}k_1\| \geq c^{n}\delta\|k_1\|$ for sufficiently large $n \in \mathbb{N}.$ Taking sufficiently large $n,$ we get
\begin{align*}
\frac{1}{n}\sum_{j=1}^{n}\|C^{j}_{\phi}k_1\|\geq \frac{1}{n}\|C_{\phi}^nk_1\|\geq \delta \frac{c^n}{n}\|k_1\|
\end{align*}
which implies 
 \begin{align*}
\sup_{n\in \mathbb{N}}\left\lbrace \frac{1}{n}\sum_{j=1}^{n}\|C^{j}_{\phi}k_1\|\right\rbrace=\sup_{n\in\mathbb{N}}\left\lbrace\frac{c^{n}}{n}\right\rbrace\|k_1\|=\infty.
\end{align*}
Therefore, $C_{\phi}$ is not an absolutely Cesàro bounded operator on $\Ber.$
 \end{proof}
 



\end{document}